\newtheorem{theorem}{Theorem}
\newtheorem{definition}{Definition}
\newtheorem*{assumption*}{Assmuption}
\title{Irreducibility criterion for singular hypersurfaces of $(\mathbb{C}^n,0)$}
\author{P. Fortuny Ayuso}
\address{Dpto. Matemáticas, Universidad de Oviedo. Oviedo, Spain.}
\email{fortunypedro@uniovi.es}
\subjclass[2020]{32S25, 32S05, 32S15, 14B05, 14J17}
\date{\today}
\definecolor{nuevo}{RGB}{0,0,120}
\begin{document}
\newcommand{\mex}[1]{\ensuremath{\left\lceil #1 \right\rceil}}
\strut
  \begin{large}
\begin{center}
    \textsc{Statement from the author}
  \end{center}
  
  This paper is wrong. The main idea is probably interesting but the statement (and obviously, the proof) are wrong. I thank Y. Genzmer for pointing out the issues which led me to realizing the mistake.

  I am leaving the contents as I wrote them in case someone finds them relevant, even if only for avoiding the same mistakes.

  Essentially: the Weierstrass derivation is wrongly computed everywhere.

  The simplest example (Genzmer) is a homogeneous cubic $x^3 + xy^2 + y^{3}$.
\end{large}
\begin{center}
  Pedro Fortuny Ayuso, June 5 2021.
\end{center}
\thispagestyle{empty}
\newpage
\begin{abstract}
Using vector fields we obtain an irreducibility criterion for hypersurfaces. It only requires the Weierstrass division.
\end{abstract}
\maketitle

\section{Introduction and Notation}
The problem of deciding whether a plane curve (in implicit form, obviously) is irreducible is of paramount importance for studying its singularity (see
\cite{Abhyankar-adv-1989,abhyankar-moh,moh-1975,kuo-1977,evelia-2012} just for several relevant examples, without any aim to completeness). The usual ways to solve it in characteristic $0$ are by means of the Newton diagram (and Puiseux series) or using Approximate Roots, following Abhyankar and Moh. Both ways require ``going further'' than the first Puiseux exponent, and a very delicate analysis of the singularity.

In this short note we provide a criterion which, using elementary vector fields and Weierstrass division decides whether a germ of (reduced) \emph{hypersurface} is or not irreducible. The result is based on the fact that vector fields can have ``bad order of tangency'' with a hypersurface if and 
only if this is reducible, because the contact structure associated to vector fields is essentially different in the reducible and irreducible cases.

\section{Irreducibility criterion for hypersurface singularities}
Let $f:(\mathbb{C}^n,0)\rightarrow (\mathbb{C},0)$ define a reduced germ of hypersurface $f=0$. For simplicity, we denote $(x,\overline{y})$ a system of coordinates in $(\mathbb{C}^n,0)$ such that $f(x,\overline{y})$ is in Weierstrass form:
\begin{equation}
  \label{eq:weierstrass}
  f(x,\overline{y}) = x^k + \sum_{i=0}^{k-1} F_{i}(y)x^i.
\end{equation}
where $k$ is the multiplicity of $f$. From here on, we assume that $k>2$ (the case $k=2$ is trivial) and $F_0(\overline{y})\neq 0$ (the hypersurface does not contain $x=0$). Let $df$ denote the differential form of $f$ and $X\in {\mathfrak X}(\mathbb{C}^2,0)$ a germ of holomorphic vector field in $(\mathbb{C}^{2},0)$.
\begin{definition}
  The \emph{tangency function} of $X$ with $f(x,\overline{y})=0$ (with respect to the specific coordinate function) is the remainder $R(x,\overline{y})$ of the Weierstrass division:
  \begin{equation*}
    df(X) = Q(x,\overline{y})f(x,\overline{y}) + R(x,\overline{y}).
  \end{equation*}
  The \emph{tangency order} of $X$ with $f(x,\overline{y})$ \emph{in the $x$-direction}  is the order of $R(x,\overline{y})$ as a power series in $x$.
\end{definition}
The tangency function measures, in some sense, ``how'' $X$ fails to be tangent to $f(x,\overline{y})=0$ \cite{Fortuny-Ribon-Canadian,fortuny-racsam,fortuny-normal-forms}, the tangency order the order of that ``failure''. We shall omit the qualifier ``in the $x$-direction'' because it is unnecessary in what follows.

The irreducibility criterion is the following (recall that $x$ does not divide $f(x,\overline{y})$):
\begin{theorem}
  With the notations and hypotheses above (recall that $f$ is reduced and $k>2$), then $f(x,\overline{y})=0$ is reducible if and only if there exists an integer $2\leq r<k$ and a vector field
  \begin{equation*}
    X = x^r \frac{\partial }{\partial x}
  \end{equation*}
  whose tangency order with $f(x,\overline{y})$ is $0$.
\end{theorem}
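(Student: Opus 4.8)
The plan is to reduce the statement to an algebraic condition on the roots of $f$ (viewed as a polynomial in $x$) and then to analyse that condition. Let $X=x^{r}\frac{\partial}{\partial x}$. Since $X$ has no component along $\overline y$ we simply have $df(X)=x^{r}\frac{\partial f}{\partial x}$, so the tangency function $R(x,\overline y)$ is the Weierstrass remainder of $x^{r}f'$ modulo $f$, where $f'=\partial f/\partial x$. To compute it I would pass to the roots of $f$: over a splitting field of $f$ in $x$ (Puiseux series, in the curve case), write $f=\prod_{l=1}^{k}\bigl(x-\zeta_{l}(\overline y)\bigr)$. Because $f$ is reduced the roots $\zeta_{l}$ are pairwise distinct, and because $F_{0}\not\equiv0$ none of them is identically zero. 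Since $\deg_{x}R<k$ and $R\equiv x^{r}f'\pmod{f}$, Lagrange interpolation at the $k$ distinct points $\zeta_{l}$ gives
\[
R(x,\overline y)=\sum_{l=1}^{k}\zeta_{l}^{\,r}\,f'(\zeta_{l})\prod_{l'\neq l}\frac{x-\zeta_{l'}}{\zeta_{l}-\zeta_{l'}}.
\]
Evaluating at $x=0$ and using $\prod_{l'\neq l}(\zeta_{l}-\zeta_{l'})=f'(\zeta_{l})$ together with $\prod_{l}\zeta_{l}=(-1)^{k}F_{0}$, the factors $f'(\zeta_{l})$ cancel and one obtains the closed form $R(0,\overline y)=-F_{0}(\overline y)\,p_{r-1}(\overline y)$, where $p_{j}:=\sum_{l=1}^{k}\zeta_{l}^{\,j}$ is the $j$-th Newton power sum of the roots.

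As $F_{0}\not\equiv0$, this says that $X=x^{r}\frac{\partial}{\partial x}$ has tangency order $0$ precisely when $p_{r-1}\not\equiv0$. Setting $s=r-1$, the theorem becomes the purely algebraic equivalence: \emph{$f$ is reducible if and only if $p_{s}\not\equiv0$ for some $s$ with $1\le s\le k-2$.} I would prove the ``if'' part by contraposition: assuming $f$ irreducible, show that $p_{s}\equiv0$ for all $1\le s\le k-2$. Since $f$ is irreducible and its degree $k$ in $x$ equals its multiplicity, it has a single branch; parametrizing it (in the curve case by a Puiseux series $y_{1}=t^{k}$, $x=\zeta(t)$ with $\ord_{t}\zeta(t)\ge k$), the $k$ roots are the conjugates $\zeta(\omega^{i}t)$, $\omega=e^{2\pi\mathrm{i}/k}$, so that $p_{s}=\sum_{i=0}^{k-1}\zeta(\omega^{i}t)^{s}$ is the symmetrization of $\zeta(t)^{s}$: only the monomials of $\zeta(t)^{s}$ whose exponent is divisible by $k$ survive, and the constraint $\ord_{t}\zeta\ge k$ should pin down those coefficients enough to force $p_{s}\equiv0$ in the range $s\le k-2$. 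Hence no admissible $X$ exists when $f$ is irreducible.

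For the converse, suppose $p_{s}\equiv0$ for every $1\le s\le k-2$. Working in characteristic $0$, Newton's identities allow one to solve recursively for the elementary symmetric functions and give $e_{1}\equiv\cdots\equiv e_{k-2}\equiv0$; in Weierstrass form this means $F_{k-1}\equiv\cdots\equiv F_{2}\equiv0$, so $f=x^{k}+F_{1}(\overline y)\,x+F_{0}(\overline y)$ is a trinomial. It then remains to show that such a trinomial, with $k>2$, $F_{0}\not\equiv0$ and $f$ reduced, must be irreducible; this is the sort of statement one settles by a Newton-polygon analysis in terms of $\ord F_{0}$ and $\ord F_{1}$, the reducedness hypothesis ruling out the degenerate configurations. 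Combining the two directions yields the equivalence, and hence the theorem.

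The step I expect to be the genuine obstacle is exactly this bridge between the vanishing of the low-order power sums and (ir)reducibility: both the claim that $p_{s}\equiv0$ for $s\le k-2$ when $f$ is irreducible, and the claim that the trinomial $x^{k}+F_{1}x+F_{0}$ is forced to be irreducible. The symmetrization computation only constrains the \emph{orders} of the terms of the branch parametrization, and the reduction to a trinomial still leaves one to control how $\ord F_{0}$, $\ord F_{1}$ and $k$ interact with the characteristic exponents of $f$. That bookkeeping---rather than the (routine) vector-field and Weierstrass-division computation---is where the difficulty concentrates.
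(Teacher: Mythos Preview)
Your reduction is correct and cleaner than the paper's argument. The paper, for the reducible case, fixes a factorisation $f=f_{1}f_{2}$, takes $X=x^{a_{1}+1}\partial/\partial x$ with $a_{1}=\deg_{x}f_{1}$, and manipulates the Weierstrass division of $df(X)$ directly; for the irreducible case it simply asserts that $x^{r}\,\partial f/\partial x$ has contact order $r-1$ with $f$. Your Lagrange interpolation giving $R(0,\overline y)=-F_{0}\,p_{r-1}$ is right, and it turns the theorem into the crisp equivalence ``$f$ reducible $\Longleftrightarrow$ $p_{s}\not\equiv0$ for some $1\le s\le k-2$''.

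The genuine gap is that this equivalence is \emph{false in both directions}, so neither your sketch nor the paper's proof can be completed; the places you flagged as ``the genuine obstacle'' are in fact impassable. Your trinomial claim fails already for the homogeneous cubic $f=x^{3}+y^{2}x+y^{3}$: here $k=3$, $F_{0}=y^{3}\ne0$, and $f$ is reduced (the discriminant of $t^{3}+t+1$ is $-31$) yet splits as $\prod_{i}(x-\alpha_{i}y)$; since $p_{1}=-F_{2}=0$, no admissible $X$ has tangency order $0$ even though $f$ is reducible. Conversely, your ``irreducible $\Rightarrow p_{s}\equiv0$'' step also fails: for the irreducible branch $y=t^{4}$, $x=t^{6}+t^{7}$ one finds $f=x^{4}-2y^{3}x^{2}-4y^{5}x+y^{6}-y^{7}$, of multiplicity $k=4$, and $p_{2}=4y^{3}\ne0$, so $X=x^{3}\partial/\partial x$ has tangency order $0$ although $f$ is irreducible. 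Your symmetrisation only kills the monomials of $\zeta(t)^{s}$ whose $t$-exponent is \emph{not} divisible by $k$; it says nothing about those that are (here $t^{12}$ coming from $(t^{6})^{2}$), and the bound $\ord_{t}\zeta\ge k$ does not prevent such terms. All this is consistent with the author's own disclaimer at the head of the paper that the statement and proof are wrong, the cubic above being precisely the counterexample he attributes to Genzmer.
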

\begin{proof}
  Assume $f(x,\overline{y})=0$ is reducible and set $f(x,\overline{y})=f_1(x,\overline{y})f_2(x,\overline{y})$. As $f$ is in Weierstrass form, we can assume $f_1(x,\overline{y})$ is too and $f_2(x,\overline{y})$ is almost: 
  \begin{equation*}
    f_1(x,\overline{y}) = x^{a_1} + \sum_{i=0}^{a_1-i}F^{1}_{i}(\overline{y})x^i,
    \;
    f_2(x,\overline{y}) = u (x,\overline{y})\bigg(x^{a_2} +
    \sum_{i=0}^{a_2-i}F^2_i(\overline{y})x^i\bigg).
  \end{equation*}
Let $X$ be the vector field
  \begin{equation*}
    X = x^{a_1+1} \frac{\partial }{\partial x}.
  \end{equation*}
  As $f_1(x,\overline{y})$ has degree $a_1$ in $x$, performing the Weierstrass division, we obtain $x^{a_1+1} = xf_1(x,\overline{y}) + R(x,\overline{y})$ where $R(x,\overline{y})=-xF^{1}_0(y)+x^2S(x,\overline{y})$ with $S(x,\overline{y})$ a holomorphic function. From this:
  \begin{equation*}
    df(X) = \left(\left(
      f_2 \frac{\partial f_1}{\partial x} +
      f_1 \frac{\partial f_2}{\partial x} 
    \right)dx + (\cdots) \right)
  \left(xf_1 + R\right) \frac{\partial }{\partial x}
\end{equation*}
where the dots denote an irrelevant holomorphic $1$-form. This gives:
\begin{equation}\label{eq:dfX-divided}
  df(X) =  xf_1f_2 \frac{\partial f_1}{\partial x} + xf_1^2 \frac{\partial f_2}{\partial x} 
  + x(-F^1_0(\overline{y}) + xS(x,\overline{y})) \left( f_1 \frac{\partial f_2}{\partial x} +
    f_2 \frac{\partial f_1}{\partial x}\right).
\end{equation}
The first term is a multiple of $f$. The second one is:
\begin{equation*}
  xf_1^2
  \left(u\frac{\partial (f_2/u)}{\partial x} + (f_2/u) \frac{\partial u}{\partial x}\right) =
  v(x,\overline{y})\left(x^{2a_1+a_2} + \sum_{i=1}^{2a_1+a_2-1} h_i(\overline{y})x^i\right)
\end{equation*}
(for a unit $v(x,\overline{y})$). This term is not a multiple of $f$ because $f$ is reduced. Notice the indices starting at $1$. The last term in \eqref{eq:dfX-divided} has degree $a_1+a_2$ in $x$ as a Weierstrass polynomial (except for a unit). Performing the Weierstrass division, we obtain:
\begin{equation*}
  df(X) = Q(x,\overline{y})f(x,\overline{y}) -
  w(x,\overline{y})F^1_0(\overline{y}))^{2}F_0^2(\overline{y}) + xT(x,\overline{y})
\end{equation*}
for a unit $w(x,\overline{y})$ and some holomorphic function $T(x,\overline{y})$. Thus, the contact order of $X$ with $f=0$ is $0$.

The reciprocal: If $f(x,\overline{y})$ is irreducible of multiplicity $n$ then, for any $X=x^r \frac{\partial }{\partial x}$ with $2\leq r<n$
\begin{equation*}
  df(X) = \left(
    \frac{\partial f}{\partial x} dx + \cdots
  \right) x^r \frac{\partial }{\partial x } =
  x^r \frac{\partial f}{\partial x}
\end{equation*}
whose contact order with $f$ is exactly $r-1$ (as $x$ does not divide $f$).
\end{proof}


\begin{thebibliography}{1}

\bibitem{Abhyankar-adv-1989}
S.S. Abhyankar.
\newblock Irreducibility criterion for germs of analytic functions of two
  complex variables.
\newblock {\em Adv. in Math}, (74):190--257, 1989.

\bibitem{abhyankar-moh}
S.S. Abhyankar and T.T. Moh.
\newblock Newton puiseux expansion and generalized tshianhausen transformation
  ii.
\newblock {\em J. reine und angew Math}, (261):29--54, 1973.

\bibitem{fortuny-racsam}
P.~Fortuny Ayuso.
\newblock Vector flows and the analytic moduli of singular plane branches.
\newblock {\em Rev. R. Acad. Cienc. Exactas. RACSAM}, 113(4):4107--4118, 2019.

\bibitem{fortuny-normal-forms}
P.~Fortuny Ayuso.
\newblock Normal forms for hypersurface singularities.
\newblock {\em arXiv:2106.00562}, 2021.

\bibitem{evelia-2012}
E.~R.~Garc{\'i}a Barroso and J.~Gwo{\'z}diewicz.
\newblock A discriminant criterion of irreducibility.
\newblock {\em Kodai. Math. J.}, (35):403--414, 2012.

\bibitem{Fortuny-Ribon-Canadian}
P.~Fortuny~Ayuso and J.~Rib\'on.
\newblock The action of a plane singular holomorphic flow on a non-invariant
  branch.
\newblock {\em Canadian Journal of Mathematics}, 2019.

\bibitem{kuo-1977}
T.C. Kuo and Y.C. Lu.
\newblock On analytic function germs of two complex variables.
\newblock {\em Topology}, (16):299--310, 1977.

\bibitem{moh-1975}
T.T. Moh.
\newblock On approximate roots of a polynomial.
\newblock {\em J. reine und angew. Math.}, (278):301--306, 1975.

\end{thebibliography}
\end{document}